\documentclass[preprint,12pt]{elsarticle}

\journal{Journal of Geometry and Physics}

\usepackage{amsmath,amssymb,amsthm,mathtools}
\usepackage{graphicx}
\usepackage[protrusion=true,expansion=false]{microtype}
\usepackage[hidelinks]{hyperref}
\allowdisplaybreaks

\numberwithin{equation}{section}

\biboptions{sort&compress}

\newtheorem{theorem}{Theorem}[section]
\newtheorem{proposition}[theorem]{Proposition}
\newtheorem{lemma}[theorem]{Lemma}
\newtheorem{corollary}[theorem]{Corollary}
\theoremstyle{definition}
\newtheorem{definition}[theorem]{Definition}
\newtheorem{example}[theorem]{Example}
\theoremstyle{remark}
\newtheorem{remark}[theorem]{Remark}

\newcommand{\Ric}{\operatorname{Ric}}
\newcommand{\Sym}{\operatorname{Sym}}
\newcommand{\Skew}{\operatorname{Skew}}
\newcommand{\tr}{\operatorname{tr}}
\newcommand{\D}{\mathcal D}
\newcommand{\hRic}{\operatorname{Ric}^{\widehat\nabla}}
\newcommand{\hnabla}{\widehat\nabla}
\newcommand{\HS}{\mathrm{HS}}

\begin{document}

\begin{frontmatter}

\title{A Quantitative Reeb-Form Obstruction for Real Hypersurfaces of Strict Nearly K\"ahler Six-Manifolds}

\author[gnu]{Jae Won Lee}
\ead{leejaew@gnu.ac.kr}
\author[knu]{Chul Woo Lee\texorpdfstring{\corref{cor1}}{}}
\ead{mathisu@knu.ac.kr}
\cortext[cor1]{Corresponding author}

\affiliation[gnu]{organization={Department of Mathematics Education, Gyeongsang National University},
                  addressline={501 Jinju-daero}, city={Jinju}, postcode={52828},
                  country={Republic of Korea}}
\affiliation[knu]{organization={Department of Mathematics, Kyungpook National University},
                  addressline={80 Daehak-ro, Buk-gu}, city={Daegu}, postcode={41566},
                  country={Republic of Korea}}

\begin{abstract}
Let $M^5$ be an oriented real hypersurface of a six-dimensional strict nearly K\"ahler manifold $(\widetilde M^6,\widetilde g,J)$ of positive constant type $\alpha$, with induced Reeb field $\xi=-JN$ and Reeb one-form $\eta=g(\,\cdot\,,\xi)$. Writing $\mathcal D_p=\ker\eta_p$ and letting $B_p$ be the orthogonal projection to $\mathcal D_p$ of the shape operator restricted to $\mathcal D_p$, we prove the exact pointwise identity
\begin{equation*}
\begin{aligned}
\left\|(d\eta)_p|_{\Lambda^2\mathcal D_p}\right\|^2
=\frac12\left\|\phi B_p+B_p\phi\right\|_{\HS}^2+8\alpha.
\end{aligned}
\end{equation*}
Consequently $\left\|(d\eta)_p|_{\Lambda^2\mathcal D_p}\right\|^2\ge 8\alpha$, so $(d\eta)_p$ cannot vanish at any point. The proof uses the decomposition of the horizontal part of $d\eta$ into a $\phi$-commuting skew endomorphism and the normal nearly K\"ahler operator $G_N$, which anti-commutes with $\phi$ and satisfies $G_N^2=-\alpha I$. Equality occurs precisely when $\phi B_p+B_p\phi=0$. As a secondary consequence, the Yano semi-symmetric metric connection associated with $\xi$ satisfies
\begin{equation*}
\begin{aligned}
\left\|\operatorname{Skew}\operatorname{Ric}^{\widehat\nabla}_p|_{\Lambda^2\mathcal D_p}\right\|^2\ge18\alpha,
\end{aligned}
\end{equation*}
and hence its Ricci tensor is nowhere symmetric. The estimate is computed explicitly on geodesic hyperspheres of the standard nearly K\"ahler $S^6$.
\end{abstract}

\begin{keyword}
nearly K\"ahler manifold \sep real hypersurface \sep Reeb one-form \sep semi-symmetric metric connection \sep nonsymmetric Ricci tensor \sep almost contact structure
\end{keyword}

\end{frontmatter}

\noindent\textbf{MSC 2020:} 53C25, 53C40, 53C44.
\medskip

\section{Introduction}

Nearly K\"ahler manifolds form a distinguished class of non-K\"ahler almost Hermitian manifolds. If $(\widetilde M,\widetilde g,J)$ is almost Hermitian with Levi-Civita connection $\widetilde\nabla$, its nearly K\"ahler tensor is
\begin{equation*}
\begin{aligned}
G(X,Y)=(\widetilde\nabla_XJ)Y.
\end{aligned}
\end{equation*}
The nearly K\"ahler condition is equivalent to the skew-symmetry of $G$ in its first two variables. In real dimension six, a strict nearly K\"ahler structure has positive constant type on each connected component; this yields a particularly rigid pointwise algebra and an associated $SU(3)$-structure. We keep the type $\alpha>0$ explicit rather than normalizing it to one. The arguments below use only this six-dimensional algebraic input, not an ambient curvature classification; see Gray \cite{Gray1970,Gray1976} and Nagy \cite{Nagy2002a} for background.

The simply connected homogeneous strict nearly K\"ahler six-manifolds were classified by Butruille \cite{Butruille2005}. Up to homothety they are
\begin{equation*}
\begin{aligned}
S^6,\qquad S^3\times S^3,\qquad \mathbb CP^3,\qquad F_{1,2}(\mathbb C^3).
\end{aligned}
\end{equation*}
Homogeneity is not intrinsic to the six-dimensional nearly K\"ahler condition. Cohomogeneity-one constructions were developed by Podest\`a and Spiro \cite{PodestaSpiro2010}, and Foscolo and Haskins constructed complete inhomogeneous nearly K\"ahler structures on $S^6$ and $S^3\times S^3$ \cite{FoscoloHaskins2017}. This makes it natural to separate pointwise hypersurface phenomena from conclusions that depend on the curvature tensor of one homogeneous model.

Let $M^5\subset\widetilde M^6$ be an oriented real hypersurface with unit normal $N$. The ambient almost complex structure induces
\begin{equation*}
\begin{aligned}
\xi=-JN,\qquad \eta=g(\,\cdot\,,\xi),\qquad JX=\phi X+\eta(X)N.
\end{aligned}
\end{equation*}
If $A$ is the shape operator, differentiation of $\xi$ gives
\begin{equation*}
\begin{aligned}
\nabla_X\xi=\phi AX-G(X,N).
\end{aligned}
\end{equation*}
Thus the exterior derivative of the Reeb one-form reflects both the extrinsic geometry of the hypersurface and the intrinsic nearly K\"ahler torsion.

There is substantial literature on real hypersurfaces of the homogeneous nearly K\"ahler models. For the standard $S^6$, Hopf hypersurfaces were studied by Berndt, Bolton and Woodward \cite{BerndtBoltonWoodward1995}. For $S^3\times S^3$, commuting and Hopf conditions, as well as related rigidity questions, were investigated in \cite{HuYaoZhang2018,HuYao2020,Hu2021}. The anticommuting relation $A\phi+\phi A=0$ was studied for $S^6$ and $S^3\times S^3$ by Hu, Yao and Zhang \cite{HuYaoZhang2019}, while the twistor-space cases $\mathbb CP^3$ and $F_{1,2}(\mathbb C^3)$ were treated by Deschamps and Loubeau \cite{DeschampsLoubeau2021}. These results are primarily ambient-specific classification or nonexistence theorems.

Two earlier almost-contact results are especially relevant. Blair showed that the induced structure on the totally geodesic $S^5\subset S^6$ has Killing structure tensors and is not cosymplectic, and more generally that a compact orientable hypersurface of a nearly K\"ahler manifold of positive sectional curvature cannot be cosymplectic \cite{Blair1971}. This is a global class-level obstruction using compactness and curvature. More recently, Anarella and D'haene studied a two-sphere of almost contact structures with the common Reeb field $\xi=-JN$ on hypersurfaces of strict nearly K\"ahler six-manifolds and proved, among other results, that none of these structures is co-K\"ahler \cite{AnarellaDhaene2026}. Their result concerns the full co-K\"ahler structure equations. It does not provide a pointwise lower bound for the canonical Reeb form.

The purpose of the present paper is to isolate the Reeb-form contribution and obtain a quantitative pointwise obstruction. Let
\begin{equation*}
\begin{aligned}
\mathcal D_p=\ker\eta_p
\end{aligned}
\end{equation*}
be the four-dimensional almost-contact distribution, and let $B_p$ denote the orthogonal projection to $\mathcal D_p$ of $A|_{\mathcal D_p}$. Our main result is the exact identity
\begin{equation}
\begin{aligned}
\left\|(d\eta)_p|_{\Lambda^2\mathcal D_p}\right\|^2
=\frac12\left\|\phi B_p+B_p\phi\right\|_{\HS}^2+8\alpha.
\label{eq:intro-quantitative}
\end{aligned}
\end{equation}
In particular,
\begin{equation}
\begin{aligned}
\left\|(d\eta)_p|_{\Lambda^2\mathcal D_p}\right\|^2\ge8\alpha>0,
\qquad p\in M,
\label{eq:intro-main}
\end{aligned}
\end{equation}
so $(d\eta)_p\neq0$ at every point.

The mechanism behind \eqref{eq:intro-quantitative} is elementary but rigid. On $\mathcal D_p$, the skew endomorphism $\phi B_p+B_p\phi$ commutes with $\phi$. In contrast, the normal nearly K\"ahler operator
\begin{equation*}
\begin{aligned}
G_NX=G(X,N)
\end{aligned}
\end{equation*}
anti-commutes with $\phi$ and satisfies
\begin{equation*}
\begin{aligned}
G_N\phi=-\phi G_N,
\qquad
G_N^2=-\alpha I_{\mathcal D_p}.
\end{aligned}
\end{equation*}
The commuting and anti-commuting parts are orthogonal for the Hilbert--Schmidt inner product. Formula \eqref{eq:intro-quantitative} follows by applying this orthogonal decomposition to the horizontal part of $d\eta$.

The distinction from the cited class-level results is important. The estimate does not classify the hypersurface and does not assert that $\eta$ is a contact form. In dimension five the contact condition is $\eta\wedge(d\eta)^2\neq0$, which is strictly stronger than $d\eta\neq0$. What \eqref{eq:intro-main} provides is a local preliminary obstruction: any induced almost contact condition requiring $d\eta=0$ is excluded before further ambient-specific equations are imposed. The equality case in \eqref{eq:intro-quantitative} is also explicit: it occurs precisely when $\phi B_p+B_p\phi=0$.

As a secondary consequence, we consider the Yano semi-symmetric metric connection
\begin{equation*}
\begin{aligned}
\widehat\nabla_XY=\nabla_XY+\eta(Y)X-g(X,Y)\xi.
\end{aligned}
\end{equation*}
For an arbitrary $n$-dimensional Riemannian manifold carrying a unit vector field, this connection satisfies
\begin{equation*}
\begin{aligned}
\operatorname{Skew}\operatorname{Ric}^{\widehat\nabla}
=-\frac{n-2}{2}\,d\eta.
\end{aligned}
\end{equation*}
Hence in dimension five our main estimate yields
\begin{equation*}
\begin{aligned}
\left\|\operatorname{Skew}\operatorname{Ric}^{\widehat\nabla}_p|_{\Lambda^2\mathcal D_p}\right\|^2
\ge18\alpha,
\end{aligned}
\end{equation*}
so the connection Ricci tensor is nowhere symmetric. The skew-Ricci identity itself is general; the strict nearly K\"ahler contribution is the quantitative lower bound.

The paper is organized as follows. Section~2 records the strict nearly K\"ahler hypersurface algebra and gives explicit computations on the standard $S^6$. Section~3 proves the quantitative Reeb-form identity and its immediate almost-contact consequences. Section~4 derives the Yano skew-Ricci identity and the resulting quantitative Ricci asymmetry. Section~5 discusses explicit homogeneous examples, the relation with earlier results, and the precise scope of the theorem.

\section{Strict Nearly K\"ahler Hypersurface Algebra}

We collect the pointwise nearly K\"ahler identities needed for the obstruction and fix the hypersurface notation. The connection with torsion is postponed until after the main theorem.

Let $(\widetilde M^6,\widetilde g,J)$ be a six-dimensional strict nearly K\"ahler manifold. Thus $J^2=-I$, $\widetilde g(JX,JY)=\widetilde g(X,Y)$, and
\begin{equation}
\begin{aligned}
G(X,Y)=(\widetilde\nabla_XJ)Y.
\label{eq:defG}
\end{aligned}
\end{equation}
The nearly K\"ahler identities used below are
\begin{equation}
\begin{aligned}
G(X,Y)&=-G(Y,X), \label{eq:Gskew}
\end{aligned}
\end{equation}
\begin{equation}
\begin{aligned}
G(X,JY)&=-JG(X,Y), \label{eq:GJ}
\end{aligned}
\end{equation}
\begin{equation}
\begin{aligned}
\widetilde g(G(X,Y),Z)&=-\widetilde g(G(X,Z),Y). \label{eq:G3form}
\end{aligned}
\end{equation}
In dimension six there is a positive constant $\alpha$ on each connected component such that
\begin{equation*}
\begin{aligned}
\|G(X,Y)\|^2=\alpha\bigl(\|X\|^2\|Y\|^2-\widetilde g(X,Y)^2-\widetilde g(JX,Y)^2\bigr),
\end{aligned}
\end{equation*}
and equivalently one may use
\begin{equation}
\begin{aligned}
G(G(X,Y),Z)=\alpha\{&\widetilde g(X,Z)Y-\widetilde g(Y,Z)X \\
&-\widetilde g(JX,Z)JY+\widetilde g(JY,Z)JX\}.
\label{eq:GG}
\end{aligned}
\end{equation}
For the six-dimensional strict case, these identities are standard; see, for example, \cite{Gray1976,AnarellaDhaene2026}. We do not normalize $\alpha$ to one.

\begin{example}[The standard strict nearly K\"ahler six-sphere]\label{ex:standardS6}
Identify $\mathbb R^7$ with the imaginary Cayley numbers and let $\times$ denote the corresponding cross product. On the unit sphere
\begin{equation*}
\begin{aligned}
S^6=\{x\in\mathbb R^7:\langle x,x\rangle=1\},
\end{aligned}
\end{equation*}
the standard nearly K\"ahler structure is
\begin{equation*}
\begin{aligned}
J_xY=x\times Y,\qquad Y\in T_xS^6.
\end{aligned}
\end{equation*}
For the unit round metric this is a strict nearly K\"ahler structure of constant type $\alpha=1$; see Gray \cite{Gray1970} and the hypersurface descriptions in \cite{AnarellaDhaene2026,BerndtBoltonWoodward1995}. Differentiating $J_xY=x\times Y$ in $\mathbb R^7$ and projecting tangentially gives
\begin{equation}
\begin{aligned}
G(X,Y)=X\times Y-\langle X\times Y,x\rangle x.
\label{eq:S6G}
\end{aligned}
\end{equation}
Thus the general identities above can be checked directly in the octonionic model. We will use this sphere repeatedly below to give genuine hypersurface examples rather than only tangent-space illustrations.
\end{example}

Let $M^5\subset\widetilde M^6$ be an oriented real hypersurface with local unit normal $N$. Its Gauss and Weingarten formulas are
\begin{equation*}
\begin{aligned}
\widetilde\nabla_XY&=\nabla_XY+g(AX,Y)N,
\end{aligned}
\end{equation*}
\begin{equation}
\begin{aligned}
\widetilde\nabla_XN&=-AX, \label{eq:Weingarten}
\end{aligned}
\end{equation}
where $A$ is self-adjoint:
\begin{equation*}
\begin{aligned}
g(AX,Y)=g(X,AY).
\end{aligned}
\end{equation*}
\begin{definition}[Induced almost contact metric structure]\label{def:induced-acms}
Define
\begin{equation*}
\begin{aligned}
\xi=-JN,\qquad \eta(X)=g(X,\xi),
\end{aligned}
\end{equation*}
and decompose
\begin{equation*}
\begin{aligned}
JX=\phi X+\eta(X)N.
\end{aligned}
\end{equation*}
Then
\begin{equation*}
\begin{aligned}
\phi^2X&=-X+\eta(X)\xi,
\end{aligned}
\end{equation*}
\begin{equation*}
\begin{aligned}
\phi\xi&=0,\qquad \eta(\phi X)=0,
\end{aligned}
\end{equation*}
\begin{equation*}
\begin{aligned}
g(\phi X,\phi Y)&=g(X,Y)-\eta(X)\eta(Y).
\end{aligned}
\end{equation*}
We write
\begin{equation*}
\begin{aligned}
\D=\ker\eta.
\end{aligned}
\end{equation*}
Thus $TM=\D\oplus\operatorname{span}\{\xi\}$ and $\phi^2=-I$ on $\D$.
\end{definition}

\begin{example}[A genuine family of real hypersurfaces in the standard $S^6$]\label{ex:latitude-family}
Fix a unit vector $e_7\in\mathbb R^7$ and a constant $c\in(-1,1)$. Put
\begin{equation*}
\begin{aligned}
M_c=\{x\in S^6:\langle x,e_7\rangle=c\},\qquad s=\sqrt{1-c^2}.
\end{aligned}
\end{equation*}
The restriction of $f(x)=\langle x,e_7\rangle$ to $S^6$ has gradient $e_7-cx$ along $M_c$, whose norm is $s>0$. Hence $c$ is a regular value and $M_c$ is a smooth embedded $5$-dimensional hypersurface. It is the geodesic hypersphere of radius $r=\arccos c$ centered at $e_7$.

A unit normal field in $S^6$ is
\begin{equation*}
\begin{aligned}
N_c(x)=\frac{e_7-cx}{s}.
\end{aligned}
\end{equation*}
Indeed, if $X\in T_xM_c$, then $\langle X,e_7\rangle=\langle X,x\rangle=0$, so $\langle X,N_c\rangle=0$, while
\begin{equation*}
\begin{aligned}
\langle N_c,N_c\rangle=\frac{1-c^2}{1-c^2}=1.
\end{aligned}
\end{equation*}
Since $c$ and $s$ are constant along $M_c$,
\begin{equation*}
\begin{aligned}
\widetilde\nabla_XN_c=-\frac{c}{s}X,
\end{aligned}
\end{equation*}
and the Weingarten formula gives
\begin{equation*}
\begin{aligned}
A=\frac{c}{s}I=(\cot r)I.
\end{aligned}
\end{equation*}
The induced Reeb field is also explicit:
\begin{equation*}
\begin{aligned}
\xi_c=-J_xN_c=-\frac{x\times e_7}{s}.
\end{aligned}
\end{equation*}
Thus $\eta_c(X)=\langle X,\xi_c\rangle$ and $\phi_cX=J_xX-\eta_c(X)N_c$.

For $c=0$ we obtain the equatorial hypersphere
\begin{equation*}
\begin{aligned}
M_0=S^6\cap e_7^\perp\cong S^5.
\end{aligned}
\end{equation*}
Here $N=e_7$ and $A=0$, so $M_0$ is totally geodesic. With the standard Cayley basis chosen so that $e_1\times e_7=-e_6$, at the point $p=e_1$ one has
\begin{equation*}
\begin{aligned}
\xi_p=-e_1\times e_7=e_6,\qquad
\D_p=\operatorname{span}\{e_2,e_3,e_4,e_5\}.
\end{aligned}
\end{equation*}
This gives a concrete realization of every object in Definition~\ref{def:induced-acms}.
\end{example}

Differentiating $\xi=-JN$ gives the basic hypersurface formula. Indeed,
\begin{equation*}
\begin{aligned}
\widetilde\nabla_X\xi=-\widetilde\nabla_X(JN)=-(\widetilde\nabla_XJ)N-J\widetilde\nabla_XN.
\end{aligned}
\end{equation*}
Using \eqref{eq:defG} and \eqref{eq:Weingarten},
\begin{equation*}
\begin{aligned}
\widetilde\nabla_X\xi=-G(X,N)+JAX.
\end{aligned}
\end{equation*}
The tangential part of $JAX$ is $\phi AX$. Hence
\begin{equation}
\begin{aligned}
\boxed{\nabla_X\xi=\phi AX-G(X,N).}
\label{eq:nablaxi}
\end{aligned}
\end{equation}
\begin{definition}[Normal nearly K\"ahler operator]\label{def:GN}
For a chosen unit normal $N$, define
\begin{equation*}
\begin{aligned}
G_NX:=G(X,N).
\end{aligned}
\end{equation*}
\end{definition}

\begin{example}[The normal operator on the totally geodesic $S^5$]\label{ex:GN-equator}
Use the equatorial hypersurface $M_0\subset S^6$ from Example~\ref{ex:latitude-family} and the point $p=e_1$. Formula~\eqref{eq:S6G} and $N=e_7$ give
\begin{equation*}
\begin{aligned}
G_NX=X\times e_7-\langle X\times e_7,e_1\rangle e_1.
\end{aligned}
\end{equation*}
For the standard Cayley basis,
\begin{equation*}
\begin{aligned}
e_2\times e_7=e_5,\quad e_5\times e_7=-e_2,\quad
e_3\times e_7=e_4,\quad e_4\times e_7=-e_3.
\end{aligned}
\end{equation*}
Hence, on $\D_p=\operatorname{span}\{e_2,e_3,e_4,e_5\}$,
\begin{equation*}
\begin{aligned}
G_Ne_2=e_5,\quad G_Ne_5=-e_2,\quad
G_Ne_3=e_4,\quad G_Ne_4=-e_3.
\end{aligned}
\end{equation*}
In particular $G_N^2=-I$ at $p$. This is an actual computation on an embedded hypersurface, not an abstract tangent-space model.
\end{example}

\begin{lemma}\label{lem:GN}
For every $X\in\D$, one has $G_NX\in\D$. Moreover, on $\D$,
\begin{equation}
\begin{aligned}
G_N\phi=-\phi G_N,
\label{eq:anticommute}
\end{aligned}
\end{equation}
and
\begin{equation}
\begin{aligned}
G_N^2=-\alpha I_{\D}.
\label{eq:GNsquare}
\end{aligned}
\end{equation}
In particular, $G_N|_{\D}$ is invertible.
\end{lemma}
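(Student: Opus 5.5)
The plan is to verify each claim directly from the pointwise identities \eqref{eq:Gskew}--\eqref{eq:GG}, working at a fixed point with $X\in\D$.

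\emph{Invariance of $\D$.} We first show that $G_NX$ is tangent to $M$ and orthogonal to $\xi$. For tangency, \eqref{eq:G3form} gives
\[
\widetilde g(G(X,N),N)=-\widetilde g(G(X,N),N),
\]
so this component vanishes. For the $\xi$-component, write $\xi=-JN$. By \eqref{eq:GJ},
\[
G(X,JN)=-JG(X,N),
\]
and by \eqref{eq:G3form},
\[
\widetilde g(G(X,N),JN)=-\widetilde g(G(X,JN),N)=\widetilde g(JG(X,N),N)=-\widetilde g(G(X,N),JN).
\]
Hence $\eta(G_NX)=0$. This argument never actually uses $X\in\D$, so $G_N$ maps all of $TM$ into $\D$. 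The same computation with $G(\xi,N)=-G(N,\xi)=G(N,JN)=-JG(N,N)=0$ shows $G_N\xi=0$, which is harmless.

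\emph{Anticommutation.} For $X\in\D$ we have $JX=\phi X\in\D$, because $\eta(X)=0$. Then \eqref{eq:GJ} gives
\[
G_N\phi X=G(JX,N)=-G(N,JX)=JG(N,X)=-JG_NX.
\]
Since $G_NX\in\D$, we have $JG_NX=\phi G_NX$. Therefore $G_N\phi=-\phi G_N$ on $\D$.

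\emph{Square.} Apply \eqref{eq:GG} with arguments $(X,N,N)$:
\[
G(G(X,N),N)=\alpha\{\widetilde g(X,N)N-\widetilde g(N,N)X-\widetilde g(JX,N)JN+\widetilde g(JN,N)JX\}.
\]
For $X\in\D$ the terms simplify as follows:
\begin{itemize}
\item $\widetilde g(X,N)=0$;
\item $\widetilde g(N,N)=1$;
\item $\widetilde g(JX,N)=-\widetilde g(X,JN)=\eta(X)=0$;
\item $\widetilde g(JN,N)=0$.
\end{itemize}
This leaves $G_N^2X=-\alpha X$. Invertibility follows since $\alpha>0$, with inverse $-\alpha^{-1}G_N$.

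\emph{Main risk.} The only delicate point is sign bookkeeping in \eqref{eq:GG}: the argument order $G(G(X,Y),Z)$ must match $G_N(G_NX)=G(G(X,N),N)$. That is exactly the form used above with $Y=Z=N$, so no rearrangement is needed. As a sanity check, Example~\ref{ex:GN-equator} gives $G_N^2=-I$ with $\alpha=1$, consistent with the computed sign. If one prefers to avoid \eqref{eq:GG}, an alternative is to use the norm identity. It gives $\|G_NX\|^2=\alpha\|X\|^2$ for $X\in\D$, and skew-adjointness of $G_N$ from \eqref{eq:G3form} then yields $-G_N^2=G_N^{\!*}G_N=\alpha I$ on $\D$.
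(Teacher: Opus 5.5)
Your proof is correct and follows the paper's argument essentially step for step. You get tangency of $G_NX$ and its orthogonality to $\xi$ from \eqref{eq:G3form}, \eqref{eq:GJ} and skew-adjointness of $J$, anticommutation from the chain $G(JX,N)=-G(N,JX)=JG(N,X)=-JG_NX$, and $G_N^2=-\alpha I_{\D}$ from \eqref{eq:GG} with $Y=Z=N$; your side remarks ($G_N$ maps all of $TM$ into $\D$ with $G_N\xi=0$, and the alternative derivation via the norm identity plus skew-adjointness) are also correct, though not needed.
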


\begin{proof}

First, $G(X,N)$ is tangent to $M$. From \eqref{eq:G3form}, with $Y=Z=N$,
\begin{equation*}
\begin{aligned}
\widetilde g(G(X,N),N)=-\widetilde g(G(X,N),N),
\end{aligned}
\end{equation*}
so $\widetilde g(G(X,N),N)=0$.

It remains to show that $G(X,N)$ is orthogonal to $\xi$. Since $\xi=-JN$,
\begin{equation*}
\begin{aligned}
g(G(X,N),\xi)=-\widetilde g(G(X,N),JN).
\end{aligned}
\end{equation*}
Let $a=\widetilde g(G(X,N),JN)$. Using \eqref{eq:G3form} and then \eqref{eq:GJ},
\begin{equation*}
\begin{aligned}
a=-\widetilde g(G(X,JN),N)=\widetilde g(JG(X,N),N).
\end{aligned}
\end{equation*}
Because $J$ is skew-adjoint with respect to $\widetilde g$,
\begin{equation*}
\begin{aligned}
\widetilde g(JG(X,N),N)=-\widetilde g(G(X,N),JN)=-a.
\end{aligned}
\end{equation*}
Thus $a=-a$, so $a=0$. Hence $G_NX\perp\xi$ and therefore $G_NX\in\D$.

Now let $X\in\D$. Then $JX=\phi X$. Using \eqref{eq:Gskew} and \eqref{eq:GJ},
\begin{equation*}
\begin{aligned}
G_N(\phi X)=G(JX,N)=-G(N,JX)=JG(N,X)=-JG(X,N).
\end{aligned}
\end{equation*}
Since $G_NX\in\D$, the action of $J$ on $G_NX$ equals $\phi$. Consequently
\begin{equation*}
\begin{aligned}
G_N(\phi X)=-\phi G_NX,
\end{aligned}
\end{equation*}
which proves \eqref{eq:anticommute}.

Putting $Y=Z=N$ in \eqref{eq:GG}. For $X\in\D$ we have
\begin{equation*}
\begin{aligned}
\widetilde g(X,N)=0,\qquad \widetilde g(JX,N)=0,\qquad \widetilde g(JN,N)=0,
\end{aligned}
\end{equation*}
so
\begin{equation*}
\begin{aligned}
G(G(X,N),N)=-\alpha X.
\end{aligned}
\end{equation*}
The left-hand side is $G_N^2X$, proving \eqref{eq:GNsquare}. Since $\alpha>0$, the equation $G_NX=0$ implies $X=0$. Thus $G_N$ is invertible on $\D$.
\end{proof}

\begin{example}[An adapted horizontal frame on the equatorial $S^5$]\label{ex:GN-frame}
In Example~\ref{ex:GN-equator}, $\alpha=1$. Take $X=e_2\in\D_p$. Since
\begin{equation*}
\begin{aligned}
\phi e_2=e_1\times e_2=e_3,\qquad
G_Ne_2=e_5,\qquad
\phi G_Ne_2=\phi e_5=-e_4,
\end{aligned}
\end{equation*}
the four vectors
\begin{equation*}
\begin{aligned}
e_2,\quad e_3,\quad e_5,\quad -e_4
\end{aligned}
\end{equation*}
form an orthonormal basis of $\D_p$. Moreover,
\begin{equation*}
\begin{aligned}
G_N(\phi e_2)=G_Ne_3=e_4=-\phi(G_Ne_2),
\end{aligned}
\end{equation*}
which displays the anti-commutation relation of Lemma~\ref{lem:GN} directly. The same table verifies $G_N^2=-I_{\D_p}$.
\end{example}

\section{The Quantitative Reeb-Form Obstruction}

We now prove the main estimate. The key point is to work directly on the four-dimensional distribution $\mathcal D=\ker\eta$, where the two parts of $d\eta$ have opposite complex-linearity with respect to $\phi$.

Fix $p\in M$ and let
\begin{equation*}
\begin{aligned}
\pi_{\mathcal D}:T_pM\longrightarrow\mathcal D_p
\end{aligned}
\end{equation*}
be the orthogonal projection. Define the self-adjoint endomorphism
\begin{equation*}
\begin{aligned}
B_p=\pi_{\mathcal D}\circ A|_{\mathcal D_p}:\mathcal D_p\longrightarrow\mathcal D_p.
\end{aligned}
\end{equation*}
Self-adjointness follows because, for $X,Y\in\mathcal D_p$,
\begin{equation*}
\begin{aligned}
g(B_pX,Y)=g(AX,Y)=g(X,AY)=g(X,B_pY).
\end{aligned}
\end{equation*}

\begin{theorem}[Quantitative Reeb-form obstruction]\label{thm:reebform}
Let $M^5$ be an oriented real hypersurface of a six-dimensional strict nearly K\"ahler manifold of positive constant type $\alpha$. Then, at every $p\in M$,
\begin{equation}
\begin{aligned}
\boxed{
\left\|(d\eta)_p|_{\Lambda^2\mathcal D_p}\right\|^2
=\frac12\left\|\phi B_p+B_p\phi\right\|_{\HS}^2+8\alpha.}
\label{eq:quantitative-main}
\end{aligned}
\end{equation}
Consequently,
\begin{equation}
\begin{aligned}
\left\|(d\eta)_p|_{\Lambda^2\mathcal D_p}\right\|^2\ge8\alpha>0,
\label{eq:quantitative-lower}
\end{aligned}
\end{equation}
and hence $(d\eta)_p\neq0$ for every $p\in M$. Equality in \eqref{eq:quantitative-lower} holds if and only if
\begin{equation}
\begin{aligned}
\phi B_p+B_p\phi=0.
\label{eq:quantitative-equality}
\end{aligned}
\end{equation}
\end{theorem}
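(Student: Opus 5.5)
The plan is to write the horizontal part of $d\eta$ as a single skew endomorphism of $\mathcal D_p$, split it into a $\phi$-commuting piece and a $\phi$-anticommuting piece, and use Hilbert--Schmidt orthogonality together with Lemma~\ref{lem:GN}. Throughout I use the conventions
\begin{equation*}
\begin{aligned}
d\eta(X,Y)=g(\nabla_X\xi,Y)-g(\nabla_Y\xi,X),\qquad
\|\omega\|^2=\sum_{i<j}\omega(e_i,e_j)^2=\tfrac12\|S\|_{\HS}^2,
\end{aligned}
\end{equation*}
where $\omega(X,Y)=g(SX,Y)$ with $S$ skew. With these choices the constants come out as stated. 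If the paper uses the other factor convention for $d\eta$, the same argument applies after rescaling.

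\textbf{Step 1: the operator representing $d\eta$ on $\mathcal D_p$.} Take $X,Y\in\mathcal D_p$ and insert \eqref{eq:nablaxi}.

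For the shape-operator part, $\phi$ is skew-adjoint and $\phi Y\in\mathcal D_p$. Hence
\begin{equation*}
\begin{aligned}
g(\phi AX,Y)=-g(AX,\phi Y)=-g(B_pX,\phi Y)=g(\phi B_pX,Y).
\end{aligned}
\end{equation*}
Similarly, $-g(\phi AY,X)=g(B_p\phi X,Y)$. So the antisymmetrized contribution is $g((\phi B_p+B_p\phi)X,Y)$.

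For the torsion part, apply \eqref{eq:G3form} and then \eqref{eq:Gskew}:
\begin{equation*}
\begin{aligned}
g(G_NX,Y)=-\widetilde g(G(X,Y),N)=\widetilde g(G(Y,X),N)=-g(G_NY,X).
\end{aligned}
\end{equation*}
Thus $G_N$ is skew-adjoint, and its antisymmetrized contribution is $-2g(G_NX,Y)$. Therefore
\begin{equation*}
\begin{aligned}
(d\eta)_p(X,Y)=g(SX,Y),\qquad S=C-2G_N,\qquad C=\phi B_p+B_p\phi .
\end{aligned}
\end{equation*}
Here $C$ is skew because $B_p$ is self-adjoint and $\phi$ is skew on $\mathcal D_p$. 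Moreover $C$ commutes with $\phi$, since $\phi C=-B_p+\phi B_p\phi=C\phi$ using $\phi^2=-I$ on $\mathcal D_p$.

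\textbf{Step 2: orthogonality.} Since $\phi$ is invertible on $\mathcal D_p$ and $G_N$ anticommutes with $\phi$ by \eqref{eq:anticommute},
\begin{equation*}
\begin{aligned}
\tr(CG_N)=\tr(\phi^{-1}C\phi G_N)=\tr(C\,\phi G_N\phi^{-1})=-\tr(CG_N).
\end{aligned}
\end{equation*}
So $\tr(CG_N)=0$, and $C\perp G_N$ in the Hilbert--Schmidt inner product. Consequently
\begin{equation*}
\begin{aligned}
\|S\|_{\HS}^2=\|C\|_{\HS}^2+4\|G_N\|_{\HS}^2 .
\end{aligned}
\end{equation*}
Because $G_N$ is skew, \eqref{eq:GNsquare} gives $\|G_N\|_{\HS}^2=-\tr(G_N^2)=4\alpha$, using $\dim\mathcal D_p=4$. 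Halving yields \eqref{eq:quantitative-main}.

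\textbf{Step 3: consequences.} The lower bound \eqref{eq:quantitative-lower} follows immediately from \eqref{eq:quantitative-main}. Equality holds if and only if $\|C\|_{\HS}=0$, which is exactly \eqref{eq:quantitative-equality}.

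\textbf{Main obstacle.} The only delicate point is bookkeeping rather than ideas: getting the signs right in the $\phi A$ term, and fixing the normalizations of $d\eta$ and of the $2$-form norm so that the constant is exactly $8\alpha$. I would check these against Example~\ref{ex:GN-frame}. On the totally geodesic $S^5$ we have $A=0$ and $\alpha=1$, so $S=-2G_N$. Its $2$-form norm should then be $\tfrac12\cdot 4\cdot 4=8$, consistent with \eqref{eq:quantitative-main}.
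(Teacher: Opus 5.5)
Your proposal is correct and follows the paper's proof essentially step for step. You represent $(d\eta)_p|_{\mathcal D_p}$ by $(\phi B_p+B_p\phi)-2G_N$, get Hilbert--Schmidt orthogonality from the $\phi$-commuting versus $\phi$-anticommuting split, and combine $\|G_N\|_{\HS}^2=4\alpha$ with $\|\omega\|^2=\tfrac12\|K\|_{\HS}^2$. The only differences are cosmetic: you restrict to $\mathcal D_p$ before antisymmetrizing, check skew-adjointness of $G_N$ directly, and prove orthogonality by trace cyclicity rather than conjugation invariance of the inner product.
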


\begin{proof}
Since $\eta(Y)=g(Y,\xi)$ and $\nabla g=0$,
\begin{equation*}
\begin{aligned}
(\nabla_X\eta)(Y)=g(\nabla_X\xi,Y).
\end{aligned}
\end{equation*}
Because the Levi-Civita connection is torsion-free,
\begin{equation*}
\begin{aligned}
d\eta(X,Y)
&=(\nabla_X\eta)(Y)-(\nabla_Y\eta)(X)\\
&=g(\nabla_X\xi,Y)-g(\nabla_Y\xi,X).
\end{aligned}
\end{equation*}
Using \eqref{eq:nablaxi},
\begin{equation*}
\begin{aligned}
d\eta(X,Y)
={}&g(\phi AX,Y)-g(\phi AY,X)\\
&-g(G_NX,Y)+g(G_NY,X).
\end{aligned}
\end{equation*}
Now $A$ is self-adjoint and $\phi$ is skew-adjoint. Hence
\begin{equation*}
\begin{aligned}
g(\phi AY,X)=-g(A\phi X,Y),
\end{aligned}
\end{equation*}
while $G_N$ is skew-adjoint by \eqref{eq:G3form}. Therefore
\begin{equation}
\begin{aligned}
\boxed{d\eta(X,Y)=g((\phi A+A\phi)X,Y)-2g(G_NX,Y).}
\label{eq:deta}
\end{aligned}
\end{equation}

We now restrict to $X,Y\in\mathcal D_p$. By the definition of $B_p$, the components of $AX$ and $A\phi X$ in the $\xi$-direction do not contribute after applying $\phi$ or pairing with $Y$. Thus \eqref{eq:deta} becomes
\begin{equation}
\begin{aligned}
(d\eta)_p(X,Y)
=g(K_pX,Y),
\qquad
K_p:=L_p-2G_N,
\label{eq:K-horizontal}
\end{aligned}
\end{equation}
where
\begin{equation*}
\begin{aligned}
L_p:=\phi B_p+B_p\phi.
\end{aligned}
\end{equation*}
Both $L_p$ and $G_N$ are skew-adjoint on $\mathcal D_p$. Indeed,
\begin{equation*}
\begin{aligned}
L_p^*=(\phi B_p+B_p\phi)^*
=-B_p\phi-\phi B_p=-L_p,
\end{aligned}
\end{equation*}
and Lemma~\ref{lem:GN} gives the corresponding property for $G_N$.

Next, because $\phi^2=-I$ on $\mathcal D_p$,
\begin{equation*}
\begin{aligned}
L_p\phi
&=\phi B_p\phi+B_p\phi^2
=\phi B_p\phi-B_p,\\
\phi L_p
&=\phi^2B_p+\phi B_p\phi
=-B_p+\phi B_p\phi.
\end{aligned}
\end{equation*}
Hence
\begin{equation}
\begin{aligned}
L_p\phi=\phi L_p.
\label{eq:Lcomm}
\end{aligned}
\end{equation}
On the other hand, Lemma~\ref{lem:GN} gives
\begin{equation}
\begin{aligned}
G_N\phi=-\phi G_N,
\qquad
G_N^2=-\alpha I_{\mathcal D_p}.
\label{eq:GNanti-square-main}
\end{aligned}
\end{equation}
Thus $L_p$ is $\phi$-complex-linear and $G_N$ is $\phi$-anti-complex-linear.

We use the Hilbert--Schmidt inner product
\begin{equation*}
\begin{aligned}
\langle S,T\rangle_{\HS}=\operatorname{tr}(S^*T)
\end{aligned}
\end{equation*}
on $\operatorname{End}(\mathcal D_p)$. Since $\phi$ is orthogonal on $\mathcal D_p$, conjugation by $\phi$ preserves this inner product. Equations \eqref{eq:Lcomm} and \eqref{eq:GNanti-square-main} therefore give
\begin{equation*}
\begin{aligned}
\langle L_p,G_N\rangle_{\HS}
&=\langle \phi^{-1}L_p\phi,\phi^{-1}G_N\phi\rangle_{\HS}\\
&=\langle L_p,-G_N\rangle_{\HS}
=-\langle L_p,G_N\rangle_{\HS}.
\end{aligned}
\end{equation*}
Consequently,
\begin{equation}
\begin{aligned}
\langle L_p,G_N\rangle_{\HS}=0.
\label{eq:HSorth}
\end{aligned}
\end{equation}
Furthermore, because $G_N^*=-G_N$ and $\dim\mathcal D_p=4$,
\begin{equation}
\begin{aligned}
\|G_N\|_{\HS}^2
&=\operatorname{tr}(G_N^*G_N)
=-\operatorname{tr}(G_N^2)\\
&=4\alpha.
\label{eq:GN-HS}
\end{aligned}
\end{equation}
Combining \eqref{eq:K-horizontal}, \eqref{eq:HSorth}, and \eqref{eq:GN-HS}, we obtain
\begin{equation}
\begin{aligned}
\|K_p\|_{\HS}^2
&=\|L_p-2G_N\|_{\HS}^2\\
&=\|L_p\|_{\HS}^2+4\|G_N\|_{\HS}^2\\
&=\|L_p\|_{\HS}^2+16\alpha.
\label{eq:K-HS}
\end{aligned}
\end{equation}

Finally, if a two-form $\omega$ on a Euclidean vector space is represented by a skew-adjoint endomorphism $K$ through $\omega(X,Y)=g(KX,Y)$, then
\begin{equation}
\begin{aligned}
\|\omega\|^2=\frac12\|K\|_{\HS}^2.
\label{eq:twoform-HS}
\end{aligned}
\end{equation}
Applying \eqref{eq:twoform-HS} to the restriction of $(d\eta)_p$ to $\mathcal D_p$ and using \eqref{eq:K-HS} gives exactly \eqref{eq:quantitative-main}. The lower bound follows because the first term on the right-hand side is nonnegative, and equality holds precisely when $L_p=\phi B_p+B_p\phi=0$.
\end{proof}

\begin{remark}[Orientation convention]\label{rem:orientation}
The orientation is used only to choose a global unit normal and hence global fields $\xi$ and $\eta$. The estimate is pointwise and is unchanged by replacing $N$ with $-N$: then $\xi$, $\eta$, $G_N$, and $d\eta$ all change sign, whereas their norms do not. Thus the same conclusion holds locally on a non-orientable hypersurface.
\end{remark}

\begin{remark}[Equality and the anticommuting shape condition]\label{rem:equality-hopf}
If the hypersurface is Hopf at $p$, then $A$ preserves $\mathcal D_p$ and $B_p=A|_{\mathcal D_p}$. In that case equality in \eqref{eq:quantitative-lower} is equivalent to
\begin{equation*}
\begin{aligned}
(\phi A+A\phi)|_{\mathcal D_p}=0.
\end{aligned}
\end{equation*}
This is the horizontal form of the anticommuting shape condition studied in the homogeneous models \cite{HuYaoZhang2019}. No Hopf assumption is needed for the estimate itself.
\end{remark}

\begin{example}[Geodesic hyperspheres in the standard $S^6$]\label{ex:theorem-latitudes}
Let $M_c$ be the family from Example~\ref{ex:latitude-family}, write $r=\arccos c$, and let $X\in\mathcal D$ be unit. Since $A=(\cot r)I$, we have
\begin{equation*}
\begin{aligned}
B=(\cot r)I,
\qquad
\phi B+B\phi=2(\cot r)\phi.
\end{aligned}
\end{equation*}
On the four-dimensional distribution $\mathcal D$, $\|\phi\|_{\HS}^2=4$, and the standard nearly K\"ahler sphere has $\alpha=1$. Theorem~\ref{thm:reebform} therefore gives
\begin{equation*}
\begin{aligned}
\left\|d\eta|_{\Lambda^2\mathcal D}\right\|^2
&=\frac12\,4\cot^2r\,\|\phi\|_{\HS}^2+8\\
&=8\cot^2r+8
=8\csc^2r.
\end{aligned}
\end{equation*}
For a direct component check, anti-commutation and skew-adjointness give $g(G_NX,\phi X)=0$, and hence
\begin{equation*}
\begin{aligned}
d\eta(X,\phi X)=2\cot r.
\end{aligned}
\end{equation*}
At the equator $r=\pi/2$, the quantitative lower bound is attained:
\begin{equation*}
\begin{aligned}
\left\|d\eta|_{\Lambda^2\mathcal D}\right\|^2=8.
\end{aligned}
\end{equation*}
At $p=e_1$, using $X=e_2$ and $G_NX=e_5$, one also obtains
\begin{equation*}
\begin{aligned}
(d\eta)_p(e_2,e_5)=-2.
\end{aligned}
\end{equation*}
Thus the estimate is realized by an explicit embedded hypersurface, and the equality case is nonempty.
\end{example}

\begin{corollary}\label{cor:notexact}
The Reeb one-form $\eta$ is not locally exact on any nonempty open subset of $M$. Equivalently, the Reeb vector field $\xi$ cannot be a local gradient vector field.
\end{corollary}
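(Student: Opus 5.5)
Argue by contradiction, working pointwise. Suppose $U\subset M$ is a nonempty open set on which $\eta$ is locally exact. Shrinking $U$ to a small ball, we may take $\eta|_U=df$ for some smooth $f$. Then $d\eta=d^2f=0$ on $U$. In particular, at any $p\in U$ the restriction $(d\eta)_p|_{\Lambda^2\mathcal D_p}$ vanishes, so its squared norm is zero. This contradicts the lower bound \eqref{eq:quantitative-lower} of Theorem~\ref{thm:reebform}, which gives $\|(d\eta)_p|_{\Lambda^2\mathcal D_p}\|^2\ge 8\alpha>0$. Hence no such $U$ exists.

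For the equivalent formulation, we use that $\eta$ is the metric dual of $\xi$: $\eta=g(\,\cdot\,,\xi)=\xi^\flat$. If $\xi=\operatorname{grad} f$ on an open set $U$, then $\eta=df$ there, which is the situation just excluded. Conversely, $\eta=df$ gives $\xi=\operatorname{grad}f$. The two statements are therefore equivalent, and both fail on every nonempty open set.

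The argument has no real obstacle, since Theorem~\ref{thm:reebform} already provides a pointwise, strictly positive lower bound. Two points need care. First, local exactness requires $d\eta=0$ only on the open set, and the theorem's bound holds at every point of it. Second, $\alpha>0$ is a constant on each connected component, so the bound is positive throughout. By Remark~\ref{rem:orientation}, orientability is not needed for this local statement. We could also observe that closedness alone is already excluded, which is stronger than excluding exactness.
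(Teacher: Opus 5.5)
Your proposal is correct and follows the paper's own argument: if $\eta=df$ on a nonempty open set, then $d\eta=d^2f=0$ there, which contradicts the pointwise bound $\|(d\eta)_p|_{\Lambda^2\mathcal D_p}\|^2\ge 8\alpha>0$ of Theorem~\ref{thm:reebform}. The gradient formulation is equivalent because $\eta=g(\,\cdot\,,\xi)$ gives $\eta=df$ if and only if $\xi=\operatorname{grad}f$.
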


\begin{proof}
If $\eta=df$ on a nonempty open set, then $d\eta=d^2f=0$ there, contradicting Theorem~\ref{thm:reebform}. Since $\eta=g(\,\cdot\,,\xi)$, the equality $\eta=df$ is equivalent to $\xi=\operatorname{grad}f$ with respect to the induced metric.
\end{proof}

\begin{example}[Failure of local exactness on the equatorial $S^5$]\label{ex:notexact-equator}
On $M_0$ we have $(d\eta)_p(e_2,e_5)=-2$ at $p=e_1$. Hence no neighborhood of $p$ admits a function $f$ with $df=\eta$. Equivalently, the explicit Reeb field $\xi=-x\times e_7$ cannot be a gradient vector field on any neighborhood of $p$.
\end{example}

\begin{corollary}\label{cor:closedclasses}
The induced almost contact metric structure cannot satisfy, on any nonempty open subset, any geometric class whose defining equations include $d\eta=0$. In particular, it cannot be co-K\"ahler or almost cosymplectic in the standard sense $d\eta=d\Phi=0$, where $\Phi(X,Y)=g(X,\phi Y)$.
\end{corollary}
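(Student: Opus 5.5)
The argument is a direct reduction to Theorem~\ref{thm:reebform}, using only pointwise information. Suppose, for contradiction, that on some nonempty open subset $U\subset M$ the induced almost contact metric structure $(\phi,\xi,\eta,g)$ satisfies a geometric class whose defining equations include $d\eta=0$. Then $(d\eta)_p=0$ for every $p\in U$. In particular the restriction $(d\eta)_p|_{\Lambda^2\mathcal D_p}$ vanishes, so its norm squared is zero.

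By the lower bound \eqref{eq:quantitative-lower}, however, $\|(d\eta)_p|_{\Lambda^2\mathcal D_p}\|^2\ge8\alpha>0$ at every point of $M$, since $\alpha>0$ is the positive constant type. This contradicts the vanishing at any single point of $U$, and $U$ is nonempty. Since only one point of $U$ is needed, no hypothesis on the size, connectedness or compactness of $U$ enters; Remark~\ref{rem:orientation} shows that the orientation is not needed either.

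For the particular classes named in the statement, I will recall that co-K\"ahler and almost cosymplectic structures are defined by equations that include $d\eta=0$. For almost cosymplectic this is explicit: $d\eta=d\Phi=0$. Co-K\"ahler (cosymplectic) means normal plus $d\eta=d\Phi=0$, or equivalently $\nabla\phi=0$, which implies $\nabla\xi=0$ and hence $d\eta=0$. Either way both fall under the general statement.

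There is no real obstacle here. The only care needed is to make sure that each definition of ``co-K\"ahler'' in use really forces $d\eta=0$. If the definition is given as $\nabla\phi=0$, I will add the one-line derivation $\nabla\xi=0$, which follows from differentiating $\phi\xi=0$ and $\eta\circ\phi=0$ together with $\phi^2=-I+\eta\otimes\xi$.
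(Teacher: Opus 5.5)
Your proposal is correct and follows the paper's own argument. Both reduce to the pointwise bound $\|(d\eta)_p|_{\Lambda^2\mathcal D_p}\|^2\ge 8\alpha>0$ of Theorem~\ref{thm:reebform}, after noting that co-K\"ahler and almost cosymplectic structures require $d\eta=0$; your added derivation that $\nabla\phi=0$ forces $\nabla\xi=0$, and hence $d\eta=0$, is a harmless supplement to the paper's citation of Blair.
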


\begin{proof}
Both co-K\"ahler and almost cosymplectic structures require the Reeb one-form to be closed; see, for example, Blair \cite{Blair2010}. Theorem~\ref{thm:reebform} rules out this necessary condition pointwise.
\end{proof}

\begin{example}[The standard induced structure on the totally geodesic $S^5$]\label{ex:closedclass-equator}
For the equatorial $M_0\subset S^6$, the standard induced tensor $\phi=J-\eta\otimes N$ satisfies
\begin{equation*}
\begin{aligned}
\left\|d\eta|_{\Lambda^2\mathcal D}\right\|^2=8,
\qquad
(d\eta)_p(e_2,e_5)=-2.
\end{aligned}
\end{equation*}
Thus this actual induced structure is neither co-K\"ahler nor almost cosymplectic. This is compatible with the finer classification of Anarella--D'haene \cite{AnarellaDhaene2026}: on the totally geodesic $S^5$, the standard distinguished tensor is nearly cosymplectic, while another distinguished tensor is Sasakian. The present obstruction concerns the canonical Reeb one-form $\eta$, which is common to these structures.
\end{example}

\begin{remark}[What the estimate does not assert]\label{rem:not-contact}
The lower bound in Theorem~\ref{thm:reebform} should not be confused with the contact condition. In dimension five, $\eta$ is a contact form only when
\begin{equation*}
\begin{aligned}
\eta\wedge(d\eta)^2\neq0,
\end{aligned}
\end{equation*}
which requires maximal rank of $d\eta$ on $\mathcal D$. The theorem gives a positive norm for the horizontal part of $d\eta$, but it does not determine its rank. It also does not classify Sasakian, nearly Sasakian, or nearly cosymplectic hypersurfaces; those stronger structure questions require additional equations, as in \cite{AnarellaDhaene2026}.
\end{remark}

\section{The Yano Semi-Symmetric Metric Connection and Quantitative Ricci Asymmetry}

We now turn to a connection-theoretic consequence. This section is deliberately separated from the proof of Theorem~\ref{thm:reebform}: the Reeb-form estimate concerns the induced almost contact geometry and does not depend on a connection with torsion. The skew-Ricci formula below is universal for the Yano connection; the nearly K\"ahler input enters only through the quantitative estimate of Section~3.

\begin{definition}[Yano semi-symmetric metric connection]\label{def:yano}
Following Yano \cite{Yano1970}, the semi-symmetric metric connection associated with the unit Reeb field $\xi$ is
\begin{equation}
\begin{aligned}
\boxed{\hnabla_XY=\nabla_XY+\eta(Y)X-g(X,Y)\xi.}
\label{eq:ssmc}
\end{aligned}
\end{equation}
\end{definition}

Because $\nabla$ is torsion-free, the torsion of $\hnabla$ is
\begin{equation*}
\begin{aligned}
\widehat T(X,Y)
&=\hnabla_XY-\hnabla_YX-[X,Y]\\
&=\eta(Y)X-\eta(X)Y.
\end{aligned}
\end{equation*}
Metricity follows directly from $\nabla g=0$:
\begin{equation*}
\begin{aligned}
(\hnabla_Xg)(Y,Z)
={}&-\eta(Y)g(X,Z)+g(X,Y)\eta(Z)\\
&-\eta(Z)g(X,Y)+g(X,Z)\eta(Y)=0.
\end{aligned}
\end{equation*}

\begin{example}[The Yano torsion on the totally geodesic $S^5$]\label{ex:yano-equator}
On $M_0\subset S^6$, take $X=e_2$ at $p=e_1$. Then $\eta(X)=0$ and $\eta(\xi)=1$, so
\begin{equation*}
\begin{aligned}
\widehat T(X,\xi)=X.
\end{aligned}
\end{equation*}
Since $A=0$ and $G_Ne_2=e_5$,
\begin{equation*}
\begin{aligned}
\nabla_X\xi=-e_5,
\qquad
\hnabla_X\xi=e_2-e_5.
\end{aligned}
\end{equation*}
Thus the Yano connection is nontrivial even though the hypersurface is totally geodesic in $S^6$.
\end{example}

\begin{proposition}[Skew-Ricci identity for the Yano connection]\label{prop:skew-general}
Let $(M^n,g)$ carry a unit vector field $\xi$, let $\eta=g(\,\cdot\,,\xi)$, and define
\begin{equation*}
\begin{aligned}
\widehat\nabla_XY=\nabla_XY+\eta(Y)X-g(X,Y)\xi.
\end{aligned}
\end{equation*}
Then
\begin{equation}
\begin{aligned}
\boxed{\operatorname{Skew}\widehat{\operatorname{Ric}}=-\frac{n-2}{2}\,d\eta.}
\label{eq:skew-general}
\end{aligned}
\end{equation}
\end{proposition}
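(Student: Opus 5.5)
The plan is to expand the curvature of $\widehat\nabla$ around that of $\nabla$ using the difference tensor, take the Ricci trace, and keep track only of the non-symmetric contributions. Write $\widehat\nabla_XY=\nabla_XY+D(X,Y)$ with
\begin{equation*}
\begin{aligned}
D(X,Y)=\eta(Y)X-g(X,Y)\xi .
\end{aligned}
\end{equation*}
Since $\nabla$ is torsion-free, the standard perturbation formula (with $R(X,Y)=[\nabla_X,\nabla_Y]-\nabla_{[X,Y]}$) reads
\begin{equation*}
\begin{aligned}
\widehat R(X,Y)Z={}&R(X,Y)Z+(\nabla_XD)(Y,Z)-(\nabla_YD)(X,Z)\\
&+D(X,D(Y,Z))-D(Y,D(X,Z)).
\end{aligned}
\end{equation*}
I take $\widehat{\operatorname{Ric}}(Y,Z)=\operatorname{tr}\bigl(X\mapsto\widehat R(X,Y)Z\bigr)$ and $\operatorname{Skew}T(Y,Z)=\tfrac12\bigl(T(Y,Z)-T(Z,Y)\bigr)$. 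The conventions must match the paper's $d\eta(X,Y)=(\nabla_X\eta)(Y)-(\nabla_Y\eta)(X)$; this normalization is the only place the constant $\tfrac12$ enters. The Levi-Civita Ricci tensor is symmetric, so it contributes nothing to the skew part.

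\textbf{Derivative terms.} Using $\nabla g=0$,
\begin{equation*}
\begin{aligned}
(\nabla_XD)(Y,Z)=(\nabla_X\eta)(Z)\,Y-g(Y,Z)\nabla_X\xi .
\end{aligned}
\end{equation*}
\begin{itemize}
\item Tracing $X\mapsto(\nabla_XD)(Y,Z)$ gives $(\nabla_Y\eta)(Z)-g(Y,Z)\operatorname{div}\xi$.
\item Tracing $X\mapsto-(\nabla_YD)(X,Z)=-(\nabla_Y\eta)(Z)X+g(X,Z)\nabla_Y\xi$ gives $-n(\nabla_Y\eta)(Z)+g(\nabla_Y\xi,Z)$, which equals $-(n-1)(\nabla_Y\eta)(Z)$.
\end{itemize}
Hence the derivative terms contribute $-(n-2)(\nabla_Y\eta)(Z)$ plus a symmetric multiple of $g$.

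\textbf{Quadratic terms.} I expect these to be symmetric, because they are built only from $g$ and $\eta\otimes\eta$. Concretely,
\begin{equation*}
\begin{aligned}
D(X,D(Y,Z))={}&\bigl(\eta(Y)\eta(Z)-g(Y,Z)\bigr)X\\
&-\bigl(\eta(Z)g(X,Y)-g(Y,Z)\eta(X)\bigr)\xi ,
\end{aligned}
\end{equation*}
whose trace is $(n-1)\bigl(\eta(Y)\eta(Z)-g(Y,Z)\bigr)$. Similarly, the trace of $D(Y,D(X,Z))$ reduces to $\eta(Y)\eta(Z)-g(Y,Z)$. Both are symmetric in $(Y,Z)$, so they also drop out of $\operatorname{Skew}$.

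\textbf{Conclusion.} We get $\operatorname{Skew}\widehat{\operatorname{Ric}}=-(n-2)\operatorname{Skew}(\nabla\eta)$. Since $\operatorname{Skew}(\nabla\eta)(Y,Z)=\tfrac12\,d\eta(Y,Z)$ by torsion-freeness of $\nabla$, this gives \eqref{eq:skew-general}.

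\textbf{Main obstacle.} The only delicate point is bookkeeping of signs and index conventions. These are the curvature sign, which slot of $\widehat R$ is traced in the Ricci tensor, and the factor in $d\eta$. A different convention would flip the overall sign, but it would not change the norm estimate used afterwards. To guard against this, I would cross-check the coefficient on the equatorial $S^5$ of Example~\ref{ex:yano-equator}, where $\nabla\xi=-G_N$ on $\mathcal D$.
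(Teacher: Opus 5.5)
Your proposal is correct and is essentially the paper's proof. It uses the same difference-tensor curvature formula, the same expressions for $(\nabla_XD)(Y,Z)$ and $D(X,D(Y,Z))$, and the same first-slot contraction, and your traces reproduce exactly the paper's formula $\widehat{\operatorname{Ric}}(X,Y)=\operatorname{Ric}(X,Y)-(n-2)(\nabla_X\eta)(Y)-(\operatorname{div}\xi+n-2)g(X,Y)+(n-2)\eta(X)\eta(Y)$, whose only non-symmetric term $-(n-2)\nabla\eta$ has skew part $-\tfrac{n-2}{2}\,d\eta$ under the stated normalization of $d\eta$.
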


\begin{proof}
Write
\begin{equation*}
\begin{aligned}
\widehat\nabla_XY=\nabla_XY+Q(X,Y),
\qquad
Q(X,Y)=\eta(Y)X-g(X,Y)\xi.
\end{aligned}
\end{equation*}
For two connections differing by a $(1,2)$-tensor $Q$,
\begin{equation*}
\begin{aligned}
\widehat R(X,Y)Z={}&R(X,Y)Z+(\nabla_XQ)(Y,Z)-(\nabla_YQ)(X,Z)\\
&+Q(X,Q(Y,Z))-Q(Y,Q(X,Z)).
\end{aligned}
\end{equation*}
Because $\eta(Z)=g(Z,\xi)$ and $\nabla g=0$,
\begin{equation*}
\begin{aligned}
(\nabla_XQ)(Y,Z)=(\nabla_X\eta)(Z)Y-g(Y,Z)\nabla_X\xi.
\end{aligned}
\end{equation*}
Moreover,
\begin{equation*}
\begin{aligned}
Q(X,Q(Y,Z))={}&\{\eta(Y)\eta(Z)-g(Y,Z)\}X\\
&-\{\eta(Z)g(X,Y)-g(Y,Z)\eta(X)\}\xi.
\end{aligned}
\end{equation*}
Substitution gives
\begin{equation}
\begin{aligned}
\widehat R(X,Y)Z={}&R(X,Y)Z+(\nabla_X\eta)(Z)Y-(\nabla_Y\eta)(Z)X\\
&-g(Y,Z)\nabla_X\xi+g(X,Z)\nabla_Y\xi\\
&+\{\eta(Y)\eta(Z)-g(Y,Z)\}X\\
&-\{\eta(X)\eta(Z)-g(X,Z)\}Y\\
&+\{g(Y,Z)\eta(X)-g(X,Z)\eta(Y)\}\xi.
\label{eq:general-curv-ssmc}
\end{aligned}
\end{equation}
Choose a local orthonormal frame $\{e_1,\dots,e_n\}$ and use
\begin{equation*}
\begin{aligned}
\widehat\Ric(X,Y)=\sum_{i=1}^n g(\widehat R(e_i,X)Y,e_i).
\end{aligned}
\end{equation*}
Contracting \eqref{eq:general-curv-ssmc} term by term yields
\begin{equation}
\begin{aligned}
\widehat\Ric(X,Y)
={}&\Ric(X,Y)-(n-2)(\nabla_X\eta)(Y)\\
&-(\operatorname{div}\xi+n-2)g(X,Y)
+(n-2)\eta(X)\eta(Y).
\label{eq:general-ricci-ssmc}
\end{aligned}
\end{equation}
For completeness, the derivative terms contribute
\begin{equation*}
\begin{aligned}
-(n-2)(\nabla_X\eta)(Y)-g(X,Y)\operatorname{div}\xi,
\end{aligned}
\end{equation*}
and the remaining quadratic terms contribute
\begin{equation*}
\begin{aligned}
-(n-2)g(X,Y)+(n-2)\eta(X)\eta(Y).
\end{aligned}
\end{equation*}
The ordinary Ricci tensor, $g$, and $\eta\otimes\eta$ are symmetric. Hence
\begin{equation*}
\begin{aligned}
\widehat\Ric(X,Y)-\widehat\Ric(Y,X)
&=-(n-2)\{(\nabla_X\eta)(Y)-(\nabla_Y\eta)(X)\}\\
&=-(n-2)d\eta(X,Y).
\end{aligned}
\end{equation*}
Dividing by two proves \eqref{eq:skew-general}.
\end{proof}

\begin{example}[A numerical skew-Ricci value]\label{ex:skewric-equator}
On $M_0\subset S^6$, use $X=e_2$ and $G_NX=e_5$ at $p=e_1$. Since $A=0$,
\begin{equation*}
\begin{aligned}
\nabla_{e_2}\xi=-e_5,
\qquad
\nabla_{e_5}\xi=e_2,
\end{aligned}
\end{equation*}
and therefore
\begin{equation*}
\begin{aligned}
(d\eta)_p(e_2,e_5)=-2.
\end{aligned}
\end{equation*}
Proposition~\ref{prop:skew-general}, with $n=5$, gives
\begin{equation*}
\begin{aligned}
(\Skew\hRic)_p(e_2,e_5)=3.
\end{aligned}
\end{equation*}
\end{example}

\begin{proposition}\label{prop:ricci}
On a real hypersurface of a strict nearly K\"ahler six-manifold, the Ricci tensor of $\hnabla$ is
\begin{equation}
\begin{aligned}
\boxed{\hRic(X,Y)=\Ric(X,Y)-3g(\phi AX,Y)+3g(G_NX,Y)-3g(\phi X,\phi Y).}
\label{eq:hRic}
\end{aligned}
\end{equation}
\end{proposition}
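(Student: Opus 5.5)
Specialize the general Ricci formula \eqref{eq:general-ricci-ssmc} from the proof of Proposition~\ref{prop:skew-general} to $n=5$. Then substitute the hypersurface expression \eqref{eq:nablaxi} for $\nabla\xi$. In dimension five \eqref{eq:general-ricci-ssmc} reads
\begin{equation*}
\begin{aligned}
\hRic(X,Y)=\Ric(X,Y)-3(\nabla_X\eta)(Y)-(\operatorname{div}\xi+3)g(X,Y)+3\eta(X)\eta(Y).
\end{aligned}
\end{equation*}
The remaining work is to compute $(\nabla_X\eta)(Y)$ and $\operatorname{div}\xi$ from the hypersurface data.

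First, $(\nabla_X\eta)(Y)=g(\nabla_X\xi,Y)=g(\phi AX,Y)-g(G_NX,Y)$ by \eqref{eq:nablaxi}. Hence $-3(\nabla_X\eta)(Y)=-3g(\phi AX,Y)+3g(G_NX,Y)$, which gives the two middle terms of \eqref{eq:hRic}.

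Second, I will show $\operatorname{div}\xi=0$, working in an orthonormal frame $\{e_i\}$ of $T_pM$:
\begin{itemize}
\item $\operatorname{div}\xi=\sum_i g(\phi Ae_i,e_i)-\sum_i g(G_Ne_i,e_i)$.
\item The first sum is $\tr(\phi A)$. Since $\phi$ is skew-adjoint and $A$ is self-adjoint, $\tr(\phi A)=\tr((\phi A)^*)=\tr(A\phi^*)=-\tr(A\phi)=-\tr(\phi A)$, so it vanishes.
\item The second sum vanishes because $G_N$ is skew-adjoint by \eqref{eq:G3form}. Note that $G_N\xi=G(\xi,N)=-G(JN,N)$ is tangent to $M$, by the tangency argument in the proof of Lemma~\ref{lem:GN}, so $G_N$ is an endomorphism of $T_pM$ and the trace makes sense.
\end{itemize}

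Third, the remaining zeroth-order terms are $-3g(X,Y)+3\eta(X)\eta(Y)$. By Definition~\ref{def:induced-acms} this equals $-3g(\phi X,\phi Y)$. Assembling the three steps gives \eqref{eq:hRic}.

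The main obstacle is the sign and contraction bookkeeping in \eqref{eq:general-ricci-ssmc} itself. If that formula is not accepted as established, I would recontract \eqref{eq:general-curv-ssmc} directly, with $\sum_i g(\widehat R(e_i,X)Y,e_i)$ and $n=5$. Tracking each term:
\begin{itemize}
\item $(\nabla_{e_i}\eta)(Y)g(X,e_i)$ contributes $(\nabla_X\eta)(Y)$.
\item $-(\nabla_X\eta)(Y)g(e_i,e_i)$ contributes $-n(\nabla_X\eta)(Y)$.
\item $-g(X,Y)g(\nabla_{e_i}\xi,e_i)$ contributes $-g(X,Y)\operatorname{div}\xi$.
\item $g(e_i,Y)g(\nabla_X\xi,e_i)$ contributes $(\nabla_X\eta)(Y)$.
\end{itemize}
The first-order terms therefore total $-(n-2)(\nabla_X\eta)(Y)-g(X,Y)\operatorname{div}\xi$, matching \eqref{eq:general-ricci-ssmc}. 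The quadratic terms give $(n-2)\{\eta(X)\eta(Y)-g(X,Y)\}$ in the same way.

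As a consistency check, the skew part of the resulting \eqref{eq:hRic} should be $-\tfrac32 d\eta$ by \eqref{eq:deta}. The symmetric part of $-3g(\phi AX,Y)$ is $-\tfrac32 g((\phi A+A\phi)X,Y)$, and $3g(G_NX,Y)$ is purely skew. This matches $-\tfrac32\bigl(g((\phi A+A\phi)X,Y)-2g(G_NX,Y)\bigr)$, consistent with Proposition~\ref{prop:skew-general}.
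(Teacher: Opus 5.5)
Your proof is correct and follows the paper's own argument: specialize \eqref{eq:general-ricci-ssmc} to $n=5$, insert $(\nabla_X\eta)(Y)=g(\phi AX,Y)-g(G_NX,Y)$ from \eqref{eq:nablaxi}, show $\operatorname{div}\xi=\tr(\phi A)-\tr(G_N)=0$ by skew-adjointness, and rewrite $-3g(X,Y)+3\eta(X)\eta(Y)$ as $-3g(\phi X,\phi Y)$. There is one harmless slip in your optional consistency check: $-\tfrac32 g((\phi A+A\phi)X,Y)$ is the skew-symmetric part of $-3g(\phi AX,Y)$, not its symmetric part (which is $-\tfrac32 g((\phi A-A\phi)X,Y)$), although the conclusion you draw there is correct.
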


\begin{proof}
Set $n=5$ in \eqref{eq:general-ricci-ssmc}. From \eqref{eq:nablaxi},
\begin{equation*}
\begin{aligned}
\operatorname{div}\xi=\tr(\phi A)-\tr(G_N).
\end{aligned}
\end{equation*}
Because $A$ is self-adjoint and $\phi$ is skew-adjoint,
\begin{equation*}
\begin{aligned}
\tr(\phi A)=0,
\end{aligned}
\end{equation*}
and $G_N$ is skew-adjoint, so $\tr(G_N)=0$. Thus $\operatorname{div}\xi=0$. Also,
\begin{equation*}
\begin{aligned}
(\nabla_X\eta)(Y)
=g(\nabla_X\xi,Y)
=g(\phi AX,Y)-g(G_NX,Y).
\end{aligned}
\end{equation*}
Substitution into \eqref{eq:general-ricci-ssmc} gives
\begin{equation*}
\begin{aligned}
\hRic(X,Y)={}&\Ric(X,Y)-3g(\phi AX,Y)+3g(G_NX,Y)\\
&-3g(X,Y)+3\eta(X)\eta(Y).
\end{aligned}
\end{equation*}
Using $g(\phi X,\phi Y)=g(X,Y)-\eta(X)\eta(Y)$ proves \eqref{eq:hRic}.
\end{proof}

\begin{proposition}\label{prop:decomp}
The symmetric and skew-symmetric parts of $\hRic$ are
\begin{equation}
\resizebox{0.90\linewidth}{!}{$
\begin{aligned}
\Sym\hRic(X,Y)
={}&\Ric(X,Y)-\frac32 g((\phi A-A\phi)X,Y)-3g(\phi X,\phi Y)
\end{aligned}
$}
\label{eq:symric}
\end{equation}
and
\begin{equation}
\begin{aligned}
\Skew\hRic(X,Y)
={}&-\frac32 g((\phi A+A\phi)X,Y)+3g(G_NX,Y).
\label{eq:skewric}
\end{aligned}
\end{equation}
Consequently,
\begin{equation}
\begin{aligned}
\hRic\ \text{is symmetric}
\quad\Longleftrightarrow\quad
\phi A+A\phi=2G_N.
\label{eq:symcondition}
\end{aligned}
\end{equation}
\end{proposition}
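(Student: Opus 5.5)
The plan is to start from the explicit formula \eqref{eq:hRic} in Proposition~\ref{prop:ricci} and split each term into its symmetric and skew parts with respect to $(X,Y)$. We use
\begin{equation*}
\begin{aligned}
\Sym T(X,Y)&=\tfrac12\{T(X,Y)+T(Y,X)\},\\
\Skew T(X,Y)&=\tfrac12\{T(X,Y)-T(Y,X)\}.
\end{aligned}
\end{equation*}
The terms $\Ric(X,Y)$ and $-3g(\phi X,\phi Y)$ are symmetric, so they go entirely into $\Sym\hRic$. The term $3g(G_NX,Y)$ is skew because $G_N$ is skew-adjoint on $TM$; this follows from \eqref{eq:G3form} together with Lemma~\ref{lem:GN}, which shows that $G_N$ maps into $\D$ and kills $\xi$ up to tangency. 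So this term goes entirely into $\Skew\hRic$.

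The only term needing real work is $-3g(\phi AX,Y)$. Its transpose is
\begin{equation*}
g(\phi AY,X)=-g(AY,\phi X)=-g(Y,A\phi X),
\end{equation*}
using that $\phi$ is skew-adjoint and $A$ is self-adjoint. Hence the symmetric part of $g(\phi AX,Y)$ is $\tfrac12 g((\phi A-A\phi)X,Y)$ and its skew part is $\tfrac12 g((\phi A+A\phi)X,Y)$. Multiplying by $-3$ and adding the other pieces gives \eqref{eq:symric} and \eqref{eq:skewric}.

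As a consistency check, I would compare \eqref{eq:skewric} with Proposition~\ref{prop:skew-general} for $n=5$ and with \eqref{eq:deta}:
\begin{equation*}
-\tfrac32\,d\eta(X,Y)=-\tfrac32 g((\phi A+A\phi)X,Y)+3g(G_NX,Y),
\end{equation*}
which matches exactly. In fact \eqref{eq:skewric} could be derived directly this way, so either route works.

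For \eqref{eq:symcondition}, $\hRic$ is symmetric if and only if $\Skew\hRic=0$. Since both $\phi A+A\phi$ and $G_N$ are endomorphisms of $T_pM$, the vanishing of the bilinear form for all $X,Y$ is equivalent to the endomorphism identity $-\tfrac32(\phi A+A\phi)+3G_N=0$, that is, $\phi A+A\phi=2G_N$. This uses nondegeneracy of $g$.

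The only mildly delicate point is confirming that $G_N$ is skew-adjoint on all of $T_pM$, including the $\xi$-direction, and not just on $\D$. This follows from \eqref{eq:G3form} applied to tangent $X,Y$: $\widetilde g(G(X,N),Y)=-\widetilde g(G(X,Y),N)$, and the right-hand side is antisymmetric in $X,Y$ by \eqref{eq:Gskew}. Everything else is a routine check of adjoints.
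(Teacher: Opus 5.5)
Your proposal is correct and follows the paper's proof: split \eqref{eq:hRic} term by term, use $(\phi A)^*=-A\phi$ for the only mixed term and the skew-adjointness of $G_N$, then set the skew part to zero. Your final-paragraph argument from \eqref{eq:G3form} and \eqref{eq:Gskew} is the right justification that $G_N$ is skew-adjoint on all of $T_pM$; the appeal to Lemma~\ref{lem:GN} is not needed, since that lemma only concerns $X\in\D$.
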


\begin{proof}
The tensors $\Ric$ and $(X,Y)\mapsto g(\phi X,\phi Y)$ are symmetric. For
\begin{equation*}
\begin{aligned}
C(X,Y)=g(\phi AX,Y),
\end{aligned}
\end{equation*}
we have $(\phi A)^*=-A\phi$, and hence
\begin{equation*}
\begin{aligned}
C(X,Y)+C(Y,X)&=g((\phi A-A\phi)X,Y),\\
C(X,Y)-C(Y,X)&=g((\phi A+A\phi)X,Y).
\end{aligned}
\end{equation*}
Since $G_N$ is skew-adjoint, \eqref{eq:symric} and \eqref{eq:skewric} follow from \eqref{eq:hRic}. The equivalence \eqref{eq:symcondition} follows by setting the skew part equal to zero.
\end{proof}

\begin{corollary}[Quantitative Ricci asymmetry]\label{thm:main}
For the Yano semi-symmetric metric connection \eqref{eq:ssmc},
\begin{equation*}
\begin{aligned}
\left\|\Skew\hRic_p|_{\Lambda^2\mathcal D_p}\right\|^2
&=\frac94\left\|(d\eta)_p|_{\Lambda^2\mathcal D_p}\right\|^2\\
&=\frac98\left\|\phi B_p+B_p\phi\right\|_{\HS}^2+18\alpha\\
&\ge18\alpha>0.
\end{aligned}
\end{equation*}
In particular, $\hRic$ is nowhere symmetric. Equality in the last inequality holds if and only if $\phi B_p+B_p\phi=0$.
\end{corollary}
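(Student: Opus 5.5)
The plan is to derive the corollary directly from Proposition~\ref{prop:skew-general} with $n=5$, combined with Theorem~\ref{thm:reebform}. For a general $n$ the proposition gives $\Skew\hRic=-\frac{n-2}{2}\,d\eta$ as two-forms on $T_pM$. Setting $n=5$ gives $\Skew\hRic_p=-\frac32(d\eta)_p$. As a consistency check, Proposition~\ref{prop:decomp} yields the same identity: \eqref{eq:skewric} combined with the formula \eqref{eq:deta} for $d\eta$ gives $\Skew\hRic=-\frac32\,d\eta$. Restricting both sides to $\Lambda^2\mathcal D_p$ preserves the proportionality, since restriction is linear. The norm on two-forms is homogeneous of degree two, so
\begin{equation*}
\begin{aligned}
\left\|\Skew\hRic_p|_{\Lambda^2\mathcal D_p}\right\|^2=\tfrac94\left\|(d\eta)_p|_{\Lambda^2\mathcal D_p}\right\|^2 .
\end{aligned}
\end{equation*}

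Next I would substitute the exact identity \eqref{eq:quantitative-main}. Multiplying by $\frac94$ gives $\frac94\cdot\frac12\|\phi B_p+B_p\phi\|_{\HS}^2=\frac98\|\phi B_p+B_p\phi\|_{\HS}^2$ and $\frac94\cdot 8\alpha=18\alpha$. The Hilbert--Schmidt term is nonnegative, so the lower bound $18\alpha$ follows. Strict positivity holds because $\alpha>0$ for a strict nearly K\"ahler six-manifold. Equality holds exactly when the Hilbert--Schmidt term vanishes, that is, when $\phi B_p+B_p\phi=0$, which matches the equality case of Theorem~\ref{thm:reebform}.

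Finally, nowhere-symmetry follows from the lower bound. If $\hRic$ were symmetric at some $p$, then $\Skew\hRic_p=0$, and hence its restriction to $\Lambda^2\mathcal D_p$ would have norm zero, contradicting the bound $\ge18\alpha>0$. No step is a real obstacle. The only point needing care is to use the same norm convention on two-forms, namely \eqref{eq:twoform-HS}, on both sides of the identity, so that the factor $\frac94$ is exact. The proportionality is an identity of tensors, so this holds automatically.
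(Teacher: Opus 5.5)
Your proposal is correct and is essentially the paper's own proof: specialize Proposition~\ref{prop:skew-general} to $n=5$ to get $\Skew\hRic=-\frac32\,d\eta$, restrict to $\Lambda^2\mathcal D_p$, and multiply the identity of Theorem~\ref{thm:reebform} by $\frac94$. Your cross-check via \eqref{eq:skewric} and \eqref{eq:deta} and your spelled-out nowhere-symmetry argument are consistent but not needed.
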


\begin{proof}
Proposition~\ref{prop:skew-general} gives $\Skew\hRic=-\frac32d\eta$ in dimension five. Apply Theorem~\ref{thm:reebform} to the restriction to $\mathcal D_p$.
\end{proof}

\begin{example}[Ricci asymmetry on geodesic hyperspheres]\label{ex:decomp-latitude}
For $M_c\subset S^6$ with $r=\arccos c$, Example~\ref{ex:theorem-latitudes} gives
\begin{equation*}
\begin{aligned}
\left\|d\eta|_{\Lambda^2\mathcal D}\right\|^2=8\csc^2r.
\end{aligned}
\end{equation*}
Therefore
\begin{equation*}
\begin{aligned}
\left\|\Skew\hRic|_{\Lambda^2\mathcal D}\right\|^2=18\csc^2r.
\end{aligned}
\end{equation*}
At the equator this equals $18$, while at $r=\pi/4$ it equals $36$. At $p=e_1$ on the equator,
\begin{equation*}
\begin{aligned}
\hRic(e_2,e_5)=3,
\qquad
\hRic(e_5,e_2)=-3,
\end{aligned}
\end{equation*}
so the nonsymmetry is also visible componentwise.
\end{example}

\section{Examples, Comparisons, and Scope}

Theorem~\ref{thm:reebform} is local and does not depend on a homogeneous curvature formula. The classical homogeneous models nevertheless provide useful concrete settings in which to interpret the obstruction.

\begin{example}[A numerical latitude: $r=\pi/4$ in the standard $S^6$]\label{ex:pi4}
Choose
\begin{equation*}
\begin{aligned}
c=\frac1{\sqrt2},\qquad
M_{\pi/4}=\left\{x\in S^6:\langle x,e_7\rangle=\frac1{\sqrt2}\right\}.
\end{aligned}
\end{equation*}
Here $A=I$, $\alpha=1$, and $B=I$. Hence
\begin{equation*}
\begin{aligned}
\left\|d\eta|_{\Lambda^2\mathcal D}\right\|^2=16,
\qquad
\left\|\Skew\hRic|_{\Lambda^2\mathcal D}\right\|^2=36.
\end{aligned}
\end{equation*}
For every unit $X\in\mathcal D$ one also has
\begin{equation*}
\begin{aligned}
d\eta(X,\phi X)=2,
\qquad
(\Skew\hRic)(X,\phi X)=-3.
\end{aligned}
\end{equation*}
This concrete hypersurface shows that the lower bound of Theorem~\ref{thm:reebform} need not be an equality away from the equator.
\end{example}

\begin{remark}[Application to the homogeneous nearly K\"ahler $S^3\times S^3$]
The standard homogeneous nearly K\"ahler product has an additional almost product structure $P$, which plays a central role in its detailed hypersurface theory \cite{Hu2021,HuYao2020,HuYaoZhang2018}. The proof of Theorem~\ref{thm:reebform}, however, does not use $P$. Thus every oriented real hypersurface of the homogeneous nearly K\"ahler $S^3\times S^3$ satisfies
\begin{equation*}
\begin{aligned}
(d\eta)_p\neq0
\end{aligned}
\end{equation*}
at every point. In classification problems this supplies an ambient-independent preliminary test: any candidate family whose induced structure would force $d\eta=0$ is excluded before the $P$-dependent equations are considered.
\end{remark}

\begin{remark}[Application to the twistor-space cases]
The same pointwise obstruction applies to the standard strict nearly K\"ahler structures on
\begin{equation*}
\begin{aligned}
\mathbb CP^3=Sp(2)/(Sp(1)\times U(1))
\end{aligned}
\end{equation*}
and
\begin{equation*}
\begin{aligned}
F_{1,2}(\mathbb C^3)=SU(3)/T^2.
\end{aligned}
\end{equation*}
The twistor-space hypersurface geometry of these examples has been studied by Deschamps and Loubeau \cite{DeschampsLoubeau2021}. The present result is complementary: it uses no twistor-specific curvature formula and rules out a closed induced Reeb form before any finer classification condition is imposed.
\end{remark}

\begin{remark}[Relation with earlier co-K\"ahler and cosymplectic obstructions]
Blair's classical result excludes compact orientable cosymplectic hypersurfaces under a positive sectional-curvature assumption on the nearly K\"ahler ambient manifold \cite{Blair1971}. Anarella--D'haene prove that the distinguished family of almost contact structures with Reeb field $\xi=-JN$ on a strict nearly K\"ahler six-manifold is never co-K\"ahler \cite{AnarellaDhaene2026}. Theorem~\ref{thm:reebform} addresses a narrower tensorial feature but does so pointwise and without compactness, homogeneity, or a sectional-curvature hypothesis. More precisely, it gives the quantitative lower bound $\|(d\eta)_p|_{\Lambda^2\mathcal D_p}\|^2\ge8\alpha$. The result is therefore complementary to, rather than a replacement for, those richer structure theorems.
\end{remark}

\begin{remark}[Why the result is not restricted to homogeneous examples]
Butruille's classification \cite{Butruille2005} lists the four simply connected homogeneous strict nearly K\"ahler six-manifolds, while Foscolo--Haskins \cite{FoscoloHaskins2017} show that strict nearly K\"ahler geometry also admits complete inhomogeneous examples. Since the proof of Theorem~\ref{thm:reebform} is pointwise, the obstruction applies equally to those nonhomogeneous structures. Homogeneity is therefore useful for examples and subsequent classification problems, but not for the obstruction itself.
\end{remark}

The main conclusion is therefore deliberately focused but quantitative: strict nearly K\"ahler six-dimensional algebra forces a uniform pointwise contribution $8\alpha$ to the squared norm of the horizontal part of $d\eta$. The theorem does not classify the hypersurface, determine the rank of $d\eta$, or imply contactness. Its strength is that it is local, has an explicit equality condition, and precedes all model-dependent classification equations. The quantitative Ricci asymmetry of the Yano connection is a direct secondary consequence.

\medskip
\noindent\textbf{Curvature convention.} We use the curvature convention
\begin{equation*}
\begin{aligned}
R(X,Y)Z=\nabla_X\nabla_YZ-\nabla_Y\nabla_XZ-\nabla_{[X,Y]}Z
\end{aligned}
\end{equation*}
and
\begin{equation*}
\begin{aligned}
\Ric(X,Y)=\sum_{i=1}^{5}g(R(e_i,X)Y,e_i).
\end{aligned}
\end{equation*}
Stating these conventions explicitly is important because opposite curvature-sign conventions occur in the literature.

\section*{Data availability}
No data were used for the research described in this article.

\section*{Funding}
This research did not receive any specific grant from funding agencies in the public, commercial, or not-for-profit sectors.

\section*{Declaration of competing interest}
The authors declare that they have no known competing financial interests or personal relationships that could have appeared to influence the work reported in this paper.

\section*{Declaration of generative AI and AI-assisted technologies in the manuscript preparation process}
During the preparation of this work the author(s) used ChatGPT (OpenAI) for drafting assistance, language editing, manuscript organization, and LaTeX formatting. After using this tool, the authors reviewed and edited the content as needed and take full responsibility for the content of the article.

\section*{Acknowledgements}
The authors thank the editor and anonymous reviewers for their time and careful consideration of this manuscript.

\end{document}